\newcounter{Theorem}[section]
\numberwithin{Theorem}{section}
\newtheorem*{thm}{Theorem}
\newtheorem{corollary}[Theorem]{Corollary}
\newtheorem{proposition}[Theorem]{Proposition}
\newtheorem{lemma}[Theorem]{Lemma}
\newtheorem{remark}[Theorem]{Remark}
\newtheorem*{prop}{Proposition}
\newcommand{\sgn}{\text{sgn}}
\numberwithin{equation}{section}
\DeclareMathOperator{\lcm}{\operatorname{lcm}}
\title[]{Cosine Sign Correlation}
\author[]{Shilin Dou, Ansel Goh, Kevin Liu,\\ Madeline Legate, \and Gavin Pettigrew}
\address{Department of Mathematics, University of Washington, Seattle, WA 98195, USA} 
\email{sdou@uw.edu}
\email{anselgoh@uw.edu}
\email{kliu15@uw.edu}
\email{mlegat@uw.edu}
\email{gpett100@uw.edu}
\keywords{Sign Correlation, WKB Asymptotics, Schr\"odinger Eigenfunctions}
\subjclass[2010]{52A40, 52C07} 
\thanks{This project was carried out under the umbrella of WXML 2022. We are grateful to the program and acknowledge helpful discussions with Stefan Steinerberger}
\begin{document}
\begin{abstract} Fix $\left\{a_1, \dots, a_n \right\} \subset \mathbb{N}$, and let $x$ be a uniformly distributed random variable on $[0,2\pi]$. The probability $\mathbb{P}(a_1,\ldots,a_n)$ that $\cos(a_1 x),  \dots, \cos(a_n x)$ are either all positive or all negative is non-zero since $\cos(a_i x) \sim 1$ for $x$ in a neighborhood of $0$. We are interested in how small this probability can be. Motivated by a problem in spectral theory, Goncalves, Oliveira e Silva, and Steinerberger proved that $\mathbb{P}(a_1,a_2) \geq 1/3$ with
equality if and only if $\left\{a_1, a_2 \right\} = \gcd(a_1, a_2)\cdot \left\{1, 3\right\}$. We prove $\mathbb{P}(a_1,a_2,a_3)\geq 1/9$ with equality if and only if $\left\{a_1, a_2, a_3 \right\} = \gcd(a_1, a_2, a_3)\cdot \left\{1, 3, 9\right\}$. The pattern does not continue, as $\left\{1,3,11,33\right\}$ achieves a smaller value than $\left\{1,3,9,27\right\}$. We conjecture multiples of $\left\{1,3,11,33\right\}$ to be optimal for $n=4$, discuss implications for eigenfunctions of Schr\"odinger operators $-\Delta + V$, and give an interpretation of the problem in terms of the lonely runner problem.
\end{abstract}

\maketitle

\section{Introduction and Result}
\subsection{Introduction}
The purpose of this paper is to introduce a seemingly elementary problem. For any given set $\left\{a_1, \dots, a_n \right\} \subset \mathbb{N}$ (where we assume $a_1 < a_2 < \dots < a_n$), we consider the associated functions $\cos(a_1 x)$, $\cos(a_2 x)$,\dots,  $\cos(a_n x)$ and ask the following question: if $x$ is chosen uniformly at random, what is the chance that all of these $n$ numbers have the same sign? Formally, we are interested in
$$ \mathbb{P}(a_1, \dots, a_n) = \frac{1}{2\pi} \left|\left\{x\in[0,2\pi]: \min_{1 \leq i \leq n} \cos{(a_i x)} >0 \quad \mbox{or} \quad \max_{1 \leq i \leq n} \cos{(a_i x)} < 0 \right\}\right|.$$
It is clear that this likelihood has to be positive because for values of $x$ near $0$ or $2\pi$, all of the cosines are close to 1. It is easy to see that $\mathbb{P}(a_1, \dots, a_n) \geq {1}/{(2a_n)}$. A natural question is how small this quantity can be. Hence, we define 
$$ p_n = \inf_{\left\{a_1, \dots, a_n \right\} \subset \mathbb{N}} \quad  \mathbb{P}(a_1, \dots, a_n).$$
It is less clear whether $p_n$ is strictly positive or what size we would expect it to be. A natural intuition is that if we take the integers $a_i$ to be large and independent of one other, then the likelihood for each $x$ to have the same sign should be roughly of the order $2^{-n}$, but there are configurations that are dramatically better than this.

\begin{prop} \label{threes}
For any $n\geq 2$, we have that
$$ p_n \leq \mathbb{P}\left(1, 3, 9, \dots, 3^{n-1} \right) = \frac{1}{3^{n-1}}.$$
\end{prop}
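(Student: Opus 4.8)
The plan is the following. The inequality $p_n \le \mathbb{P}(1,3,\dots,3^{n-1})$ is immediate from the definition of $p_n$ as an infimum over all choices of $\{a_1,\dots,a_n\}$, so the whole statement reduces to the evaluation $\mathbb{P}(1,3,\dots,3^{n-1}) = 1/3^{n-1}$. Since each $\cos(3^i x)$ vanishes on only a finite subset of $[0,2\pi]$, the event that all of $\cos x, \cos 3x, \dots, \cos 3^{n-1}x$ share a sign is, up to a set of measure zero, the disjoint union of
\[
S_n = \bigl\{x \in [0,2\pi] : \cos(3^i x) > 0 \text{ for } 0 \le i \le n-1\bigr\}
\]
and the analogous all-negative set $T_n$, so that $\mathbb{P}(1,3,\dots,3^{n-1}) = \bigl(|S_n| + |T_n|\bigr)/(2\pi)$ and it suffices to compute these two measures.

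First I would pin down $S_n$ exactly, by induction on $n$, proving
\[
S_n = \Bigl[\,0,\ \tfrac{\pi}{2\cdot 3^{n-1}}\,\Bigr) \cup \Bigl(\,2\pi - \tfrac{\pi}{2\cdot 3^{n-1}},\ 2\pi\,\Bigr],
\]
i.e.\ the arc of length $\pi/3^{n-1}$ about $0 \equiv 2\pi$. The base case $n=1$ is just the description of $\{\cos x > 0\}$ on $[0,2\pi]$. For the inductive step write $c = \tfrac{\pi}{2\cdot 3^{n-1}}$ and use the key arithmetic identity $c = \tfrac34\cdot\tfrac{2\pi}{3^{n}}$: the arc $[0,c)$ has length exactly three quarters of the period of $\cos(3^{n}x)$, and since the positivity set of $\cos(3^n x)$ consists of the opening and closing quarter of each period, $[0,c)\cap\{\cos(3^n x)>0\} = \bigl[0,\tfrac14\cdot\tfrac{2\pi}{3^n}\bigr) = \bigl[0,\tfrac{\pi}{2\cdot 3^{n}}\bigr)$. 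The behaviour near $2\pi$ follows from the symmetry $x\mapsto 2\pi-x$, which preserves every condition $\cos(3^i x)>0$ because $\cos$ is even and $2\pi$-periodic. This yields $S_{n+1}$ in the asserted form and in particular $|S_n| = \pi/3^{n-1}$.

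Next I would obtain $T_n$ from $S_n$ via the translation $x\mapsto x+\pi$: each exponent $3^i$ is odd, so $\cos(3^i(x+\pi)) = -\cos(3^i x)$, whence $T_n$ equals $S_n + \pi \pmod{2\pi}$, an arc of length $\pi/3^{n-1}$ about $\pi$, with $|T_n| = |S_n|$. Finally, for $n \ge 2$ we have $\tfrac{\pi}{2\cdot 3^{n-1}} \le \tfrac{\pi}{6} < \tfrac{\pi}{2}$, so $S_n$ (sitting near $0$ and $2\pi$) and $T_n$ (sitting near $\pi$) are disjoint; therefore
\[
\mathbb{P}(1,3,\dots,3^{n-1}) = \frac{|S_n| + |T_n|}{2\pi} = \frac{2\pi/3^{n-1}}{2\pi} = \frac{1}{3^{n-1}}.
\]

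I do not expect a genuine obstacle here. The one point that must be handled with care is the inductive step: the collapse by a factor of exactly $3$ is forced by the identity $\tfrac{\pi}{2\cdot 3^{n-1}} = \tfrac34\cdot\tfrac{2\pi}{3^{n}}$, and one should verify that no spurious extra components of $S_n$ are created along the way — they are not, precisely because at every stage the surviving arc is shorter than one full period of the next cosine.
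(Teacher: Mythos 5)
Your proof is correct and follows essentially the same route as the paper's: an induction in which the set where all the cosines share a sign is identified as short arcs about $0$, $\pi$, and $2\pi$, and each new factor $\cos(3^{n}x)$ cuts these arcs by exactly a factor of three (your identity $\tfrac{\pi}{2\cdot 3^{n-1}}=\tfrac34\cdot\tfrac{2\pi}{3^{n}}$ is precisely the paper's period-counting step). The only difference is cosmetic: the paper packages the induction as a more general lemma (if $\mathbb{P}(a_1,\dots,a_n)=1/a_n$ with all $a_i$ odd, then appending $3a_n$ gives $1/(3a_n)$) and deduces the arc structure from that hypothesis, whereas you compute the positive and negative sets explicitly for the powers of $3$, using the symmetries $x\mapsto 2\pi-x$ and $x\mapsto x+\pi$.
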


In general, $p_n$ appears to decay faster than $3^{-n}$, but we are not aware of much weaker bounds.  For small values of $n$, much more is understood. In particular, there is a precise result on $p_2$. Goncalves, Oliveira e Silva, and Steinerberger \cite{diogo} proved that $p_2 = 1/3$ and, more precisely,
$$ \mathbb{P}(a_1, a_2) \geq \frac{1}{3}$$
with equality if and only if $\{a_1,a_2\}=\gcd(a_1,a_2)\cdot \{1,3\}$. Their result is slightly more general and phrased in a different setting, where it was used to understand sign correlations of eigenfunctions of Schr\"odinger operators. We refer to \cref{related} for details.

\subsection{Result.}
The main purpose of our paper is to establish that $p_3 = 1/9$ and to identify configurations for which the value is attained.

\begin{thm}[Main Result] \label{maintheorem}
We have
$$ \mathbb{P}(a_1, a_2, a_3) \geq \frac{1}{9}$$
with equality if and only if $\left\{a_1, a_2, a_3\right\} = \gcd(a_1, a_2, a_3) \cdot \left\{1,3,9 \right\}$.
\end{thm}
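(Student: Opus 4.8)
The plan is to first reduce to the coprime case. Since $\mathbb{P}(a_1,a_2,a_3)$ depends only on the ratios $a_i/\gcd(a_1,a_2,a_3)$ (replacing $x$ by $x/\gcd$ is a measure-preserving reparametrization modulo the period), we may assume $\gcd(a_1,a_2,a_3)=1$ and $a_1<a_2<a_3$. I would then set up the problem combinatorially: the sign of $\cos(a_i x)$ on $[0,2\pi]$ is constant on each of the $2a_i$ intervals cut out by the zeros $x = \pi(2k+1)/(2a_i)$. Superimposing all three grids partitions $[0,2\pi]$ into finitely many intervals, on each of which the sign pattern $(\sgn\cos(a_1x),\sgn\cos(a_2x),\sgn\cos(a_3x))$ is constant, and $\mathbb{P}$ is the total length of the intervals carrying the patterns $(+,+,+)$ and $(-,-,-)$. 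A symmetry reduction via $x\mapsto 2\pi-x$ and, when useful, $x \mapsto x+\pi$ (which flips the sign of $\cos(a_ix)$ exactly when $a_i$ is odd) should cut the casework.

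The core of the argument is a lower bound $\mathbb{P}(a_1,a_2,a_3)\ge 1/9$. I would try to leverage the known $n=2$ result: for each pair $\{a_i,a_j\}$ we have control on $\mathbb{P}(a_i,a_j)$, and more refined information on \emph{where} the agreement occurs. One natural route is to localize near $x=0$: on the interval $[0,\pi/(2a_3)]$ all three cosines are positive, giving length $\pi/(2a_3)$, hence $\mathbb{P}\ge 1/(2a_3)$, which already settles all cases with $a_3\le 4$ except we must check $a_3\le 4$ configurations by hand (finitely many). For $a_3\ge 5$ this crude bound is insufficient, so one needs to account for additional sign-agreement intervals away from the origin. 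The key structural input is presumably an analysis of the "first disagreement": as $x$ increases from $0$, the first cosine to change sign does so at $x=\pi/(2a_3)$, and one tracks how the sign pattern evolves, using the interleaving of the three arithmetic progressions of zeros. I would look for an inequality of the shape: the measure of $\{(+,+,+)\}$ plus the measure of $\{(-,-,-)\}$ is bounded below by a sum of contributions, one of which already is a function that, by number-theoretic constraints on $(a_1,a_2,a_3)$ being distinct coprime integers, is at least $1/9$, with equality forcing $(1,3,9)$.

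For the equality characterization I would argue in two parts. First, verify directly that $\mathbb{P}(1,3,9)=1/9$: the zeros of $\cos(9x)$ refine nothing automatically, but one computes the sign patterns on the common grid of mesh $\pi/(2\cdot 9)$-compatible points and adds up; this is a finite, explicit computation. Second, show that equality in the lower bound chain forces the configuration. This likely requires that each inequality used in the lower bound is tight simultaneously — for instance, tightness in the pairwise bound $\mathbb{P}(a_i,a_j)\ge 1/3$ forces $\{a_i,a_j\}$ to be a multiple of $\{1,3\}$, and combining the three pairwise constraints $\{a_1,a_2\}\sim\{1,3\}$, $\{a_1,a_3\}\sim\{1,3\}$, $\{a_2,a_3\}\sim\{1,3\}$ together with $\gcd=1$ pins down $\{1,3,9\}$. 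If the lower bound does not cleanly decompose through pairwise bounds, the equality analysis becomes a more delicate rigidity argument on the interval lengths.

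I expect the main obstacle to be the lower bound for large $a_3$: unlike the $n=2$ case, three overlapping grids create a combinatorial explosion of sign patterns, and it is not obvious a priori which auxiliary intervals (beyond the trivial one at the origin) are guaranteed to carry an all-equal sign pattern for \emph{every} coprime triple. Making the counting argument uniform in $(a_1,a_2,a_3)$ — reducing infinitely many configurations to finitely many cases plus one clean inequality — is the crux; the small-$a_3$ cases and the equality discussion, while tedious, should be routine by comparison.
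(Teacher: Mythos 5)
Your write-up is a plan rather than a proof, and the step you yourself identify as the crux --- a lower bound that works uniformly for all coprime triples once $a_3$ is large --- is exactly the ingredient that is missing. Beyond the trivial localization $\mathbb{P}(a_1,a_2,a_3)\ge 1/(2a_3)$ you only say you would ``look for an inequality of the shape'' that forces $1/9$; no mechanism is proposed for producing it, and the grid/sign-pattern bookkeeping you describe does not by itself tame the infinitely many configurations. The paper closes this gap with two quantitative inputs you do not have: (i) the Fourier-analytic identity of Goncalves--Oliveira e Silva--Steinerberger, which gives the exact pair correlations $\bigl|\int_0^1\Phi(at,bt)\,dt\bigr|=1/|pq|$ (and $0$ in the even case), so that $\mathbb{P}(a,b,c)\le 1/9$ forces $a,b,c$ odd and $1/|pq|+1/|rs|+1/|uv|\ge 5/9$, which by elementary divisibility casework pins down $a=1$ and $b\le 7$; and (ii) the scale-separation lemma that if $a_n>12\cdot\lcm(a_1,\dots,a_{n-1})$ then $\mathbb{P}(a_1,\dots,a_n)>\tfrac13\,\mathbb{P}(a_1,\dots,a_{n-1})$, which together with $p_2=1/3$ yields $c\le 12b\le 84$; the remaining finitely many triples are checked exactly by a sampling lemma. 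Nothing of comparable force appears in your outline.

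There is also a concrete error in your proposed equality analysis: you suggest that tightness would force all three pairs $\{a_1,a_2\},\{a_1,a_3\},\{a_2,a_3\}$ to be multiples of $\{1,3\}$, and that this pins down $\{1,3,9\}$. Those three conditions are mutually inconsistent ($a_2=3a_1$ and $a_3=3a_2$ give $a_3=9a_1$, so $\{a_1,a_3\}$ is a multiple of $\{1,9\}$, not $\{1,3\}$), and indeed the actual extremizer violates them: $\mathbb{P}(1,9)=4/9\neq 1/3$. So the lower bound cannot decompose through simultaneous pairwise tightness, and an argument along those lines would wrongly conclude that no triple attains $1/9$, contradicting $\mathbb{P}(1,3,9)=1/9$ (which the paper establishes by an exact description of the set where all signs agree, not merely a finite check). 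The uniqueness in the paper instead falls out of the finite search after the reductions $a=1$, $b\le 7$, $c\le 84$.
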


Our proof uses Fourier Analysis to establish that having the $a_i$ spread over different scales results in probabilities closer to $2^{-(n-1)}$. Extremal configurations have at least some
$a_i$ relatively small, which is reminiscent of \cite{stein}. We use this to deduce that 
$a_1$ and $a_2$ are relatively small, i.e., $a_1=1$ and $a_2 \leq 7$, and show that 
$$\mbox{if}~a_3 \gg a_2 \quad \mbox{then} \qquad  \mathbb{P}(a_1,a_2, a_3) \sim \frac{1}{2} \cdot \mathbb{P}(a_1,a_2).$$
 Combined with the existing
result $\mathbb{P}(a_1,a_2) \geq p_2 = 1/3$, this shows that $a_3$ cannot be much larger than $a_2$. We specifically deduce $a_3 \leq 84$, which reduces the problem to a finite search space.
There does not appear to be a fundamental obstacle to generalize the approach to $p_4$, but the number of cases increases dramatically.\\

Naturally, one could be tempted to conjecture a general pattern and expect that powers of 3 are the extremal
configuration for the problem. This is not the case, as an explicit computation shows
$$ \mathbb{P}\left( 1, 3, 11, 33\right) = \frac{1}{33} < \frac{1}{27} = \mathbb{P}\left(1,3,9,27\right).$$
Using Monte-Carlo sampling to narrow down a list of candidates $1\leq a_1<a_2<a_3<a_4\leq 105$ and then performing an explicit calculation using \cref{exact}, we believe that multiples of $\left\{1, 3, 11, 33 \right\}$ are the extremal configuration for $n=4$. As for $n=5$, numerical investigation has identified $\left\{1,3,11,35,105\right\}$ as a possible candidate,
which shows $p_5 \leq 1/105$. It is again tempting to draw conclusions from these examples. It seems not inconceivable that configurations
with the minimal sign correlation have $a_1=1,a_2=3$, and $a_n=3a_{n-1}$.

\subsection{Related questions.}\label{related} The paper \cite{diogo} is concerned with the sign pattern of eigenfunctions of Schr\"odinger operators $H= -\Delta + V$ on the
real line $\mathbb{R}$. For many of these operators, there exists a WKB expansion that allows us to replace the eigenfunction
by a trigonometric expansion up to a small error. For the sake of a concrete example, we consider the operator $H = -\Delta + x^2$ whose eigenfunctions are the 
Hermite functions $H_n(x)$. Ordinarily, one would expect the sign of a Hermite function $H_n$ in two different points $x \neq y$ to be decoupled or unrelated. However, the sign of $H_n(1/2)$ and $H_n(5/2)$ are identical for a set of $n$ with asymptotic density \[\lim_{n\to\infty} \frac{1}{n}\#\{1\leq i\leq n :\sgn(H_i(1/2))=\sgn(H_i(5/2))\}=\frac35.\] 
The relationship to our problem can be seen from an asymptotic expansion of Hermite functions (valid on any compact interval)
$$ \frac{ \Gamma(2n+1)}{\Gamma(4n+1)} e^{x^2/2} H_{4n}(x) = \cos{(\sqrt{8n} x)} + \mathcal{O}(1/\sqrt{n})$$
and the observation that the sequence $\sqrt{8n} \mod 2\pi$ behaves like a uniformly distributed random variable (in the sense of being uniformly distributed over $[0,2\pi]$).\\

A second related problem is the Lonely Runner Conjecture by Cusick \cite{cusick} and Wills \cite{wills}. In this problem, $n$ runners start in the same spot on a circular track of length 1 and then run with constant speeds $v_1,v_2,\ldots,v_n$. The conjecture is that each runner
gets lonely at some time, meaning that the runner is distance at least $1/n$ from all other runners. The problem is known to be difficult and only understood for
small $n$ and special settings. We refer to 
\cite{bar, boh, cz, kravitz, per, ren, tao} for an incomplete list of results. We specifically mention Goddyn-Wong \cite{godd}, who studied tight configurations of the lonely runner problem, which seem to be of a similar type as our conjectured extremal examples. 

\begin{center}
\begin{figure}[h!]
\begin{tikzpicture}
\draw [thick] (0,0) circle (1.5cm);
\filldraw (1.5, 0) circle (0.06cm);
\node at (1.7, -0.2) {$0$};
\draw [thick, dashed] (0, -1.7) -- (0, 1.7);
\node at (0.8,0) {$+$};
\node at (-0.8,0) {$-$};
\end{tikzpicture}
\caption{For $n$ runners on a circular track of length 1 all starting at $0$ and running at a constant (integer) speed, what is the proportion of time that they all spend in the right half ($+$) or the left half ($-$)?}
\end{figure}
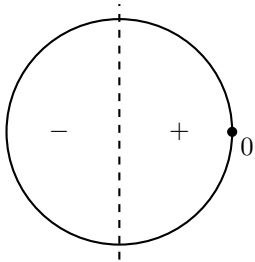
\end{center}

Our problem admits a similar such interpretation (see Fig. 1).
In our problem, we can imagine the runners as starting in the same place, and we are asking for the proportion of time that they are either all together on the left-hand side of the track or all together on the right-hand side of the track.

\section{Proofs of Results}

\subsection{Outline of the Proof} The proof proceeds as follows. In \cref{n dimensions}, we show that $\mathbb{P}(1,3,\ldots,3^{n-1})=1/3^{n-1}$ and 
then establish two useful lemmas.  In \cref{3 dimensions}, we apply our results to the three-dimensional case $\mathbb{P}(a,b,c)$. We first revisit a Fourier analysis result from \cite{diogo}. Using this, we reduce to the case of $a,b,c$ odd in \cref{even reduction} and then to the case $a=1$ in \cref{a=1 reduction}. We then use our results to bound $b$ and $c$, and by checking these finitely many cases with a computer, we establish our main theorem.

\subsection{Notation and results for general dimension}
\label{n dimensions}

We start with some general notation. While we are primarily interested in sign correlation on the interval $[0,2\pi]$, it is useful at times to consider other intervals.
Let $a_1,a_2,\ldots,a_n$ be natural numbers. The \emph{cosine sign correlation} of $(a_1,\ldots,a_n)$ on a bounded interval $I\subseteq \mathbb{R}$ is defined as
\begin{equation*}
    \mathbb{P}_I(a_1,\ldots,a_n) = \frac{1}{|I|}\cdot \left|\left\{x\in I:\min_{1\leq i\leq n} \cos(a_ix)>0 \text{ or } \max_{1\leq i\leq n}\cos(a_ix)<0\right\}\right|.
\end{equation*}
When $I=[0,2\pi]$, we omit the subscript. 
Define the indicator function
\[\chi_{(a_1,\ldots,a_n)}(x)=\begin{cases} 1 & \text{ if $\min_{1\leq i\leq n}\cos(a_ix)>0$ or $\max_{1\leq i\leq n} \cos(a_ix)<0$} \\
0 & \text{ otherwise}\end{cases}.\]
Observe that the sign correlation of $(a_1,\ldots,a_n)$ can be equivalently expressed as
\begin{equation}\label{correlation_integral}
    \mathbb{P}_I(a_1,\ldots,a_n)=\frac{1}{|I|}\int_I \chi_{(a_1,\ldots,a_n)}(x) \, dx.
\end{equation}
We now prove a result that implies the proposition given in the introduction.

\begin{proposition} \label{threesg}
Suppose that $\mathbb{P}(a_1,a_2,...,a_n) = {1}/{a_n}$ and $a_i$ is odd for all $i$. Then for any positive integer $m$, $$\mathbb{P}(a_1,a_2,...,a_n, 3a_n ..., 3^m a_n ) = \frac{1}{3^ma_n}.$$ 
\end{proposition}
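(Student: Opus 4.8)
The plan is to use the parity hypothesis to show that $\chi_{(a_1,\ldots,a_n)}$ is $\pi$-periodic, use the equality hypothesis to identify the sign-agreement set \emph{exactly} (up to measure zero), and then observe that appending $3a_n$ reproduces the very same structure with $a_n$ replaced by $3a_n$; the statement then follows by induction on $m$. \textbf{Step 1 (parity $\Rightarrow$ $\pi$-periodicity).} Since $\cos(a_i(x+\pi)) = (-1)^{a_i}\cos(a_i x) = -\cos(a_i x)$ when $a_i$ is odd, translation by $\pi$ flips the sign of every $\cos(a_i x)$ simultaneously, hence preserves the event ``all same sign''. Thus $\chi := \chi_{(a_1,\ldots,a_n)}$ is $\pi$-periodic (it is also even), and consequently $\mathbb{P}(a_1,\ldots,a_n) = \frac1\pi\,|S\cap[-\pi/2,\pi/2]|$, where $S=\{x:\chi(x)=1\}$.

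\textbf{Step 2 (equality pins down $S$).} Write $N=a_n$ and $I_N=[-\pi/(2N),\pi/(2N)]$. For $t\in I_N$ and any $i$ we have $|a_i t|\le \pi/2$, so $\cos(a_i t)\ge 0$; hence $I_N\subseteq S$ up to endpoints, giving $\mathbb{P}\ge |I_N|/\pi = 1/N$. The hypothesis $\mathbb{P}=1/N$ therefore forces $S\cap[-\pi/2,\pi/2]=I_N$ up to a null set, i.e. $S=\bigcup_{k\in\mathbb{Z}}(I_N+k\pi)$ up to a null set. Moreover, writing $x=k\pi+t$ with $t\in I_N$, the identity $\cos(a_i x)=(-1)^{a_i k}\cos(a_i t)=(-1)^k\cos(a_i t)$ shows that on the $k$-th interval every $\cos(a_i x)$ has sign $(-1)^k$. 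In other words, for odd $a_i$ the conditions ``$\mathbb{P}=1/N$'' and ``$S=\bigcup_k(I_N+k\pi)$ up to null, with common sign $(-1)^k$ on the $k$-th piece'' are equivalent; this is the self-reproducing statement we will iterate.

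\textbf{Step 3 (appending $3M$).} Suppose a configuration $(b_1,\ldots,b_\ell)$ of odd integers with largest element $M$ satisfies the structural description of Step 2. I claim $(b_1,\ldots,b_\ell,3M)$, which is again a configuration of odd integers with largest element $3M$, satisfies it with $M$ replaced by $3M$. Indeed, a point $x\in S_{(b_1,\ldots,b_\ell)}$ can be written $x=k\pi+t$ with $t\in I_M$ and common sign $(-1)^k$; since $3M$ is odd, $\cos(3Mx)=(-1)^k\cos(3Mt)$, so $x$ survives the new constraint iff $\cos(3Mt)>0$. As $t$ ranges over $I_M$, the quantity $3Mt$ ranges over $[-3\pi/2,3\pi/2]$, on which $\cos$ is positive precisely on $(-\pi/2,\pi/2)$ --- crucially no second positivity interval appears because the frequency ratio $3$ is less than $4$ --- so the surviving values of $t$ are exactly the interior of $(-\pi/(6M),\pi/(6M))=I_{3M}$. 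Hence $S_{(b_1,\ldots,b_\ell,3M)}=\bigcup_k(I_{3M}+k\pi)$ up to a null set, with common sign $(-1)^k$ on the $k$-th piece.

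\textbf{Step 4 (induction) and the main difficulty.} Starting from $(a_1,\ldots,a_n)$ --- to which Step 2 applies by hypothesis --- and applying Step 3 a total of $m$ times shows that $(a_1,\ldots,a_n,3a_n,\ldots,3^m a_n)$ satisfies the structural description with largest element $3^m a_n$, whence its cosine sign correlation equals $|I_{3^m a_n}|/\pi = 1/(3^m a_n)$, as desired. The step I expect to require the most care is the sign bookkeeping in Step 3: the common sign on $S$ is not constant but equals $(-1)^k$ on the $k$-th translate, and one must verify that $\cos(3Mx)$ carries exactly this parity (which is precisely where oddness of $3M$ enters) before the clean interval computation applies; it is also worth stating the Step 2 reformulation explicitly, since without it the induction does not visibly close.
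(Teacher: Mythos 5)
Your proposal is correct and follows essentially the same route as the paper: use oddness plus the equality $\mathbb{P}=1/a_n$ to pin down the sign-agreement set as (up to measure zero) the union of intervals of half-width $\pi/(2a_n)$ around multiples of $\pi$, note that appending $3a_n$ (odd, ratio $3<4$) cuts each such interval to exactly a third, and induct on $m$. Your $\pi$-periodicity bookkeeping and ``up to null set'' handling are just a mild repackaging of the paper's explicit three-interval description on $[0,2\pi]$ and its continuity argument removing the exceptional set.
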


\begin{proof}
We first show that 
\begin{equation}\label{intervals}
    \chi_{(a_1,\ldots,a_n)}^{-1}(1)=\left[0,\frac{\pi}{2 a_n}\right) \cup \left(\pi-\frac{\pi}{2 a_n},\pi +\frac{\pi}{2 a_n} \right) \cup \left(2\pi-\frac{\pi}{2a_n}, 2 \pi \right].
\end{equation}
All $a_i$ are odd, so all $\cos(a_i t)$ are positive in a neighborhood of $0$, negative in a neighborhood of $\pi$, and positive in a neighborhood of $2\pi$. Since $\cos(a_i t)$ has period $2\pi/a_i$ and $a_n$ is the largest of the $a_i$'s, we have
$$ \left[0,\frac{\pi}{2 a_n}\right) \cup \left(\pi-\frac{\pi}{2 a_n},\pi +\frac{\pi}{2 a_n} \right) \cup \left(2\pi-\frac{\pi}{2a_n}, 2 \pi \right] \subseteq \chi_{(a_1,\ldots,a_n)}^{-1}(1).$$
Note that the total length of these intervals is $2\pi/a_n$, so \[\mathbb{P}(a_1,a_2,...,a_n)=\frac{1}{2\pi}\cdot |\chi_{(a_1,\ldots,a_n)}^{-1}(1)| = \frac{1}{a_n}\] implies that (\ref{intervals}) holds up to a set $N$ of measure 0. Since $\cos(a_i t)$ is continuous, we see that if $\chi_{(a_1,\ldots,a_n)}(x)=1$ for some $x\in [0,2\pi]$, then $\chi_{(a_1,\ldots,a_n)}\equiv 1$ in some interval containing $x$. Hence, $N=\emptyset$, and (\ref{intervals}) holds.

Now consider $(a_1,\ldots,a_n,3a_n)$. The period of $\cos(3a_n t)$ is $2\pi/(3a_n)$, so on the interval $I=\left[0,\pi/(2 a_n)\right)$, we see that $\cos(3a_nt)>0$ only for $t\in [0,\pi/(6a_n))$. On the remaining intervals in $\chi_{(a_1,\ldots,a_n)}^{-1}(1)$, we see that $\cos(3a_nt)<0$ on $(\pi-\pi/(6a_n),\pi+\pi/(6a_n))$ and $\cos(3a_nt)>0$ on $(2\pi-\pi/(6a_n),2\pi]$. Combined, we see that
\[ \chi_{(a_1,\ldots,a_n,3a_n)}^{-1}(1)=\left[0,\frac{\pi}{6 a_n}\right) \cup \left(\pi-\frac{\pi}{6 a_n},\pi +\frac{\pi}{6 a_n} \right) \cup \left(2\pi-\frac{\pi}{6a_n}, 2 \pi \right].\]
From this, we conclude \[\mathbb{P}(a_1,\ldots,a_n,3a_n)=\frac{1}{2\pi}\int_0^{2\pi} \chi_{(a_1,\ldots,a_n,3a_n)}(x) \, dx=\frac{1}{2\pi}\cdot \frac{2\pi}{3a_n}=\frac{1}{3a_n}.\]
Observe that if $a_n$ is odd, then $3a_n$ is also odd. Hence, the general result follows from induction on $m$. 
\end{proof}

Our general approach in the preceding result is to consider where  $\chi_{(a_1,\ldots,a_n)}$ has value $1$. Using this idea, we derive a general method for calculating $\mathbb{P}(a_1,\ldots,a_n)$.

\begin{lemma}\label{exact}
Let $\ell=\lcm(a_1,\ldots,a_n)$. For each $m\in \{0,1,\ldots,4\ell-1\}$, choose a sample point $x_m^*\in (\pi{m}/{(2\ell)},\pi{(m+1)}/{(2\ell)})$. Then
\[\mathbb{P}(a_1,\ldots,a_n)=\frac{\#\{x_m^*:\chi_{(a_1,\ldots,a_n)}(x_m^*)=1\}}{4\ell}.\]
\end{lemma}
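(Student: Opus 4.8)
The plan is to show that the indicator function $\chi_{(a_1,\ldots,a_n)}$ is constant on each of the $4\ell$ intervals $I_m = (\pi m/(2\ell), \pi(m+1)/(2\ell))$ for $m \in \{0,1,\ldots,4\ell-1\}$, so that evaluating at one sample point $x_m^*$ per interval determines the value on all of $I_m$, and the stated formula then follows by summing lengths. First I would observe that $[0,2\pi] = \bigcup_{m=0}^{4\ell-1} \overline{I_m}$ and the interiors $I_m$ are pairwise disjoint, so by \eqref{correlation_integral},
\[
\mathbb{P}(a_1,\ldots,a_n) = \frac{1}{2\pi}\sum_{m=0}^{4\ell-1} \int_{I_m} \chi_{(a_1,\ldots,a_n)}(x)\,dx,
\]
the endpoints contributing nothing to the integral. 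Each $I_m$ has length $\pi/(2\ell) = 2\pi/(4\ell)$, so if $\chi_{(a_1,\ldots,a_n)}$ equals a constant $c_m \in \{0,1\}$ on $I_m$, then $\int_{I_m}\chi = c_m\cdot 2\pi/(4\ell)$, and the sum becomes $(2\pi/(4\ell))\cdot\#\{m : c_m = 1\}$, giving exactly the claimed identity once we divide by $2\pi$ and note $c_m = \chi_{(a_1,\ldots,a_n)}(x_m^*)$.

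The key step is therefore constancy of $\chi_{(a_1,\ldots,a_n)}$ on each open interval $I_m$. For this I would argue that each individual function $x \mapsto \cos(a_i x)$ has constant (strict) sign on $I_m$: the zeros of $\cos(a_i x)$ occur precisely at $x = \pi(2k+1)/(2a_i)$ for $k \in \mathbb{Z}$, and since $a_i \mid \ell$, each such zero has the form $\pi \cdot (\text{integer})/(2\ell)$, hence is an endpoint of some $I_m$ and does not lie in the open interior of any $I_m$. Thus on $I_m$ none of the $\cos(a_i x)$ vanishes, so each has a well-defined sign $\varepsilon_i(m)\in\{+,-\}$ that is constant across $I_m$ by the intermediate value theorem. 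Consequently the conditions ``$\min_i \cos(a_i x) > 0$'' (all $\varepsilon_i(m) = +$) and ``$\max_i \cos(a_i x) < 0$'' (all $\varepsilon_i(m) = -$) are each either satisfied throughout $I_m$ or violated throughout $I_m$, so $\chi_{(a_1,\ldots,a_n)}$ is constant on $I_m$, equal to its value at the arbitrarily chosen $x_m^*$.

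I do not anticipate a serious obstacle here; the only point requiring care is the bookkeeping that every zero of every $\cos(a_i x)$ lands on the grid $\{\pi j/(2\ell) : j \in \mathbb{Z}\}$, which is exactly why the denominator $4\ell$ (rather than, say, $2\ell$) appears — the relevant spacing of zeros of $\cos(a_i x)$ is $\pi/a_i$, and one needs $\pi/a_i$ to be an integer multiple of the grid spacing $\pi/(2\ell)$, i.e. $a_i \mid 2\ell$, which holds since $a_i \mid \ell$. A minor remark I would include is that the sample points may be chosen completely freely within each $I_m$ precisely because $\chi$ is constant there, and that a handful of the $\chi(x_m^*)$ values near $x=0$, $x=\pi$, $x=2\pi$ will equal $1$, consistent with the lower bound $\mathbb{P}(a_1,\ldots,a_n)\geq 1/(2a_n)$ noted in the introduction.
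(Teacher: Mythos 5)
Your proposal is correct and follows essentially the same route as the paper: the zeros of each $\cos(a_i x)$ lie on the grid $\{\pi j/(2\ell)\}$ because $a_i \mid \ell$, so $\chi_{(a_1,\ldots,a_n)}$ is constant on each open interval $(\pi m/(2\ell),\pi(m+1)/(2\ell))$, and the formula follows by summing over the $4\ell$ equal-length intervals, the endpoints being a null set. Your write-up is in fact slightly more explicit than the paper's (spelling out the constant-sign argument via the intermediate value theorem), but there is no substantive difference.
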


\begin{proof}
The function $\cos(a_ix)$ is $0$ when $a_ix=\pi/2+\pi k$ for some $k\in \mathbb{Z}$. Then zeros can only occur when \[x=\frac{\pi}{2a_i}+\frac{\pi k}{a_i}=\frac{\pi(1+2k)}{2a_i}=\pi\cdot \frac{(1+2k)\cdot \ell/a_i}{2\ell}.\]
Hence, $\chi_{(a_1,\ldots,a_n)}$ is constant on intervals of the form $(\pi m / (2\ell),\pi (m+1)/(2\ell))$. Each of these intervals has the same length, and the finitely many points of the form $\pi m / (2\ell)$ on $[0,2\pi]$ do not affect the integral in (\ref{correlation_integral}).
\end{proof}

Recall that $p_n=\inf_{\{a_1,\ldots,a_n\}\subset \mathbb{N}}\mathbb{P}(a_1,\ldots,a_n)$. The results of \cite{diogo} imply $p_2=1/3$ and \cref{threesg} implies $p_3\leq 1/9$, so  $p_3\leq  p_2/3$. Focusing on this factor of $1/3$, we show that $\mathbb{P}(a_1,\ldots,a_n)\leq \mathbb{P}(a_1,\ldots,a_{n-1})/3$ can only hold when $a_n$ is sufficiently small with respect to the remaining integers $\{a_1,\ldots,a_{n-1}\}$. 

\begin{lemma}\label{bound an}
If $a_n> 12\cdot {\lcm}(a_1,\ldots,a_{n-1})$, then \[\mathbb{P}(a_1,\ldots,a_n)> \frac{1}{3} \cdot \mathbb{P}(a_1,\ldots,a_{n-1}).\]
\end{lemma}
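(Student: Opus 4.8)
The plan is to exploit the fact that when $a_n$ is very large compared to $\ell := \lcm(a_1,\dots,a_{n-1})$, the function $\cos(a_n x)$ oscillates so rapidly that, on any subinterval where $\chi_{(a_1,\dots,a_{n-1})}\equiv 1$, it spends essentially half its time positive and half negative. So on the region where the first $n-1$ cosines already agree in sign, imposing the extra constraint that $\cos(a_n x)$ also match cuts the measure roughly in half — but crucially by \emph{more} than a factor of $3$ only if we are not careful, so we must show the cut is by strictly \emph{less} than a factor of $3$, i.e.\ that the surviving measure exceeds $\frac13$ of the original.

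First I would use \cref{lemma bound an}'s natural companion structure: let $S \subseteq [0,2\pi]$ be the set where $\chi_{(a_1,\dots,a_{n-1})}(x)=1$, so $|S| = 2\pi\,\mathbb{P}(a_1,\dots,a_{n-1})$. By \cref{exact} (applied to $a_1,\dots,a_{n-1}$), $S$ is a finite union of intervals whose endpoints are among the points $\pi k/(2\ell)$; in particular $S$ is a union of at most $2\ell$ intervals, each of length a multiple of... no — more simply, $S$ is a union of finitely many intervals, and I claim $\chi_{(a_1,\dots,a_{n-1})}\equiv 1$ on a neighborhood of $0$ and of $\pi$ (if all $a_i$ odd) giving at least one interval of guaranteed length $\pi/(2\ell) \cdot (\text{something})$. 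The key quantitative step: on an interval $J$ of length $L$, the set where $\cos(a_n x) > 0$ has measure at least $\frac{L}{2} - \frac{\pi}{a_n}$ (you lose at most one half-period $\pi/a_n$ at each end due to boundary effects, and actually at most $\pi/a_n$ total by a careful count, since consecutive sign-blocks have equal length $\pi/a_n$). Similarly for $\cos(a_n x) < 0$. Summing over the intervals comprising $S$: writing $S = \bigsqcup_{j} J_j$ with $J_j$ of length $L_j$, the measure of $\{x \in S : \chi_{(a_1,\dots,a_n)}(x) = 1\}$ is at least $\sum_j \left( \frac{L_j}{2} - \frac{\pi}{a_n}\right) = \frac{|S|}{2} - \frac{\pi N}{a_n}$, where $N$ is the number of intervals $J_j$.

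The main obstacle — and the reason the hypothesis is $a_n > 12\ell$ rather than merely $a_n$ large — is controlling $N$, the number of connected components of $S$, and relating it to $|S|$. Since the breakpoints of $\chi_{(a_1,\dots,a_{n-1})}$ all lie in $\frac{\pi}{2\ell}\mathbb{Z}$, each component $J_j$ has length at least $\pi/(2\ell)$, so $N \le \frac{|S|}{\pi/(2\ell)} = \frac{2\ell |S|}{\pi}$. Plugging in: the surviving measure is at least $\frac{|S|}{2} - \frac{\pi}{a_n}\cdot\frac{2\ell|S|}{\pi} = |S|\left(\frac12 - \frac{2\ell}{a_n}\right)$. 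Now if $a_n > 12\ell$ then $\frac{2\ell}{a_n} < \frac16$, so this is at least $|S|\left(\frac12 - \frac16\right) = \frac{|S|}{3}$; and since at least one component (the one around $0$) contributes strictly more than $\frac{L_j}{2} - \frac{\pi}{a_n}$ — in fact the boundary loss at $x=0$ is only one-sided — the inequality is strict. Dividing by $2\pi$ gives $\mathbb{P}(a_1,\dots,a_n) > \frac13 \mathbb{P}(a_1,\dots,a_{n-1})$. I would need to double-check the precise boundary-loss constant (is it $\pi/a_n$ per interval or $\pi/(2a_n)$?) and whether strictness genuinely survives after the per-interval estimates are summed — that bookkeeping, rather than any conceptual difficulty, is where the real work lies, and it is presumably what forces the explicit constant $12$.
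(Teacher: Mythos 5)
Your proposal is correct and is essentially the paper's proof: the paper works on the elementary intervals of length $\pi/(2\ell)$ on which $\chi_{(a_1,\ldots,a_{n-1})}\equiv 1$, counts the full periods of $\cos(a_n x)$ inside each, and arrives at exactly your bound $\tfrac12-\tfrac{2\ell}{a_n}>\tfrac13$, so your grouping into connected components of $S$ and bounding their number by $2\ell|S|/\pi$ is the same estimate in slightly different packaging. The only detail worth adding is the one-line observation that the common sign of $\cos(a_1x),\ldots,\cos(a_{n-1}x)$ is constant on each connected component of $S$ (an interior sign change would force all of these cosines to vanish at that point, which would remove it from $S$ and split the component), which is what justifies imposing a single sign condition on $\cos(a_n x)$ per component in your summation.
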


Note that for values of $a_n\leq 12\cdot {\lcm}(a_1,\ldots,a_{n-1})$, the conclusion of \cref{bound an} need not hold. For example, \cref{exact} allows us to calculate $\mathbb{P}(1,3,11)=5/33$ and $\mathbb{P}(1,3,11,33)=1/33$. 

\begin{proof}[Proof of \cref{bound an}]
Let $\ell=\lcm(a_1,\ldots,a_{n-1})$. As observed in the preceding lemma, $\chi_{(a_1,\ldots,a_{n-1})}$ is constant on any interval of the form $I=(\pi m/(2\ell) , \pi (m+1)/(2\ell))\subseteq [0,2\pi]$. 
Suppose $\chi_{(a_1,\ldots,a_{n-1})}|_I = 1$. The function $\cos(a_nt)$ completes $r$ full cycles on $I$ for some $r\in \mathbb{N}$. We denote the intervals for these cycles $I_1,\ldots,I_r$, and let $I_{r+1}$ be the remaining portion of $I$.  Decompose
\begin{equation}\begin{split}
\mathbb{P}_I(a_1,\ldots,a_n) & = \frac{\sum_{j=1}^r|I_j|\mathbb{P}_{I_j}(a_1,\ldots,a_n)+|I_{r+1}|\mathbb{P}_{I_{r+1}}(a_1,\ldots,a_n)}{|I|} \\
& \geq \frac{\sum_{j=1}^r|I_j|\mathbb{P}_{I_j}(a_1,\ldots,a_n)}{|I|}.
\end{split}\end{equation}
Observe that since $\cos(a_n t)$ completes one full cycle in each $I_j$ and all remaining components have the same sign, we have that \[\mathbb{P}_{I_j}(a_1,\ldots,a_n)=\frac12\cdot  \mathbb{P}_{I_j}(a_1,\ldots,a_{n-1})=\frac12.\] All intervals $I_j$ have the same length, so this implies
\[\mathbb{P}_I(a_1,\ldots,a_n) \geq \frac{r|I_1|}{2|I|}.\]
Since $|I|=\pi/(2\ell)$, $|I_1|=2\pi/a_n$, and $r=\left\lfloor (\pi/(2\ell))/(2\pi/a_n)\right\rfloor=\lfloor a_n/(4\ell)\rfloor $, we find
\[\mathbb{P}_I(a_1,\ldots,a_n) \geq \frac{4\ell}{2a_n}\cdot \left\lfloor \frac{a_n}{4\ell}\right\rfloor\geq \frac{2\ell}{a_n}\left( \frac{a_n}{4\ell}-1\right)=\frac{1}{2}-\frac{2\ell}{a_n}.\]
Since the assumption $a_n> 12\ell$ implies $2\ell/a_n< 1/6$, we have \[\mathbb{P}_{I}(a_1,\ldots,a_n)> \frac{1}{2}-\frac{1}{6} =\frac{1}{3}=\frac{1}{3}\mathbb{P}_I(a_1,\ldots,a_{n-1})\] on any $I$ where $\chi_{(a_1,\ldots,a_{n-1})}\equiv 1$. Hence, $\mathbb{P}(a_1,\ldots,a_n)> \mathbb{P}(a_1,\ldots,a_{n-1})/3$. 
\end{proof}

\begin{remark}\normalfont 
By also considering an upper bound in the proof of the preceding lemma, one can obtain the bounds
\begin{equation}\label{1/2}
    \frac{1}{2}-\frac{2\ell}{a_n}\leq \mathbb{P}_I(a_1,\ldots,a_n) \leq \frac{1}{2}+\frac{4\ell}{a_n}.
\end{equation}
Hence, we see that as $a_n\to \infty$, we have $\mathbb{P}_I(a_1,\ldots,a_n)\to 1/2$. This allows us to conclude that \[\lim_{a_n\to\infty} \mathbb{P}(a_1,\ldots,a_n)=\frac{1}{2} \cdot \mathbb{P}(a_1,\ldots,a_{n-1}),\]
so this formalizes the idea that large values of $a_n$ multiply sign correlation by a factor of approximately $1/2$. The bounds in (\ref{1/2}) also allow us to find $a_n$ so that the factor is arbitrarily close to $1/2$, and \cref{bound an} is a special case of this.
\end{remark}

\subsection{Three dimensions}\label{3 dimensions}

We now focus on the three-dimensional case and prove our Main Result. Goncalves, Oliveira e Silva, and Steinerberger considered \begin{equation}
    \Phi(x,y)=\text{sgn}(\cos(2\pi x)\cos(2\pi y)).
\end{equation}
Using Fourier Analysis, they established the following result for lines on the two-dimensional torus $\mathbb{T}^2=[0,1]^2/\sim$.

\begin{lemma}[\cite{diogo}, Lemma 3]\label{og eq}
Let $a,b\in \mathbb{R}$ be nonzero such that $a/b=p/q$ for some coprime $p,q\in \mathbb{Z}$. Let $\alpha,\beta\in \mathbb{R}$ and let $\gamma(t)=(at-\alpha,bt-\beta)$ be the corresponding ray on $\mathbb{T}^2$. If either $p$ or $q$ are even, then 
\[\lim_{T\to \infty} \frac{1}{T}\int_0^T \Phi(\gamma(t)) \, dt =0. \]
If both $p$ and $q$ are odd, then
\[\lim_{T\to\infty} \frac{1}{T} \int_0^T \Phi(\gamma(t)) \, dt =(-1)^{\frac{p+q}{2}} \frac{8}{\pi^2 pq} \sum_{\ell=0}^\infty \frac{\cos(2\pi(2\ell+1)(p\beta-q\alpha))}{(2\ell+1)^2}.\]
\end{lemma}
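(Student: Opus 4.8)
The plan is to expand everything into Fourier series of square waves. Write $s(u)=\sgn(\cos(2\pi u))$, a $1$-periodic function whose Fourier series is the classical
$$s(u)=\frac{4}{\pi}\sum_{\ell=0}^{\infty}\frac{(-1)^{\ell}}{2\ell+1}\cos\bigl(2\pi(2\ell+1)u\bigr).$$
Since $\cos(2\pi x)\cos(2\pi y)\neq 0$ for almost every $(x,y)$, we have $\Phi(x,y)=s(x)s(y)$ a.e., and multiplying the two series gives the double Fourier series
$$\Phi(x,y)=\frac{16}{\pi^2}\sum_{j,\ell\geq 0}\frac{(-1)^{j+\ell}}{(2j+1)(2\ell+1)}\cos\bigl(2\pi(2j+1)x\bigr)\cos\bigl(2\pi(2\ell+1)y\bigr).$$
Substituting $x=at-\alpha$, $y=bt-\beta$ and using $\cos A\cos B=\tfrac12\bigl(\cos(A-B)+\cos(A+B)\bigr)$ rewrites $\Phi(\gamma(t))$ as a sum of terms $c\cos(2\pi\omega t+\varphi)$ with frequencies $\omega=(2j+1)a\pm(2\ell+1)b$. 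Since $\tfrac1T\int_0^T\cos(2\pi\omega t+\varphi)\,dt\to 0$ for $\omega\neq0$ and $\to\cos\varphi$ for $\omega=0$, only the \emph{resonant} terms, those with $\omega=0$, survive in the limit, each contributing its constant value.

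Next I would find the resonant terms. From $a/b=p/q$ one sees that $\omega=0$ forces $(2j+1)|p|=(2\ell+1)|q|$; for a given $(j,\ell)$ at most one of the two signs in $\omega=(2j+1)a\pm(2\ell+1)b$ can vanish, the relevant one being determined by $\sgn(ab)$, and the other is automatically nonzero. If $p$ or $q$ is even, then the other is odd (as $\gcd(p,q)=1$), so $(2j+1)|p|=(2\ell+1)|q|$ has no solution by parity; hence every term averages to $0$ and the limit is $0$. If both $p$ and $q$ are odd, then $\gcd(|p|,|q|)=1$ together with $2j+1$ odd forces $2j+1=(2k+1)|q|$ and $2\ell+1=(2k+1)|p|$ for some integer $k\geq0$, and conversely every such $k$ gives a resonant term. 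Tracking the phase through the substitution, the $k$-th contribution is a multiple of $\cos\bigl(2\pi(2k+1)(p\beta-q\alpha)\bigr)$ with coefficient $\tfrac{8}{\pi^2}\cdot\tfrac{(-1)^{j+\ell}}{(2j+1)(2\ell+1)}$; using $(2j+1)(2\ell+1)=(2k+1)^2|pq|$, evaluating $(-1)^{j+\ell}$ by reducing $(2k+1)|q|$ and $(2k+1)|p|$ modulo $4$, and summing the absolutely convergent series over $k\geq0$ produces the closed form in the statement. Matching the sign and constant exactly — reconciling $(-1)^{j+\ell}$ and $\sgn(pq)$ with the factor $(-1)^{(p+q)/2}/pq$ — is the one genuinely delicate bookkeeping step, best checked against a small case such as $p=q=1$, where $\Phi(\gamma(t))$ is readily averaged by hand.

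The remaining issue, which is the real obstacle, is that $s$ and hence $\Phi$ are discontinuous, so the double Fourier series does not converge pointwise and one cannot restrict an $L^2(\mathbb{T}^2)$ function to the line $\gamma$ naively; the formal manipulation above must be justified. Since $a/b=p/q$ is rational, $\gamma$ is periodic on $\mathbb{T}^2$, say with period $T_0$, and $t\mapsto\Phi(\gamma(t))$ is a bounded, $T_0$-periodic, piecewise constant function — the jump set of $\Phi$ is a finite union of horizontal and vertical lines, each crossed transversally by $\gamma$ because $a,b\neq0$ — so the limit exists and equals $\tfrac1{T_0}\int_0^{T_0}\Phi(\gamma(t))\,dt$. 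To evaluate it, let $s_N$ be the $N$-th partial sum of the series for $s$ and $\Phi_N(x,y)=s_N(x)s_N(y)$, a trigonometric polynomial; for $\Phi_N$ the resonance computation above is a finite, fully rigorous calculation of $\tfrac1{T_0}\int_0^{T_0}\Phi_N(\gamma(t))\,dt$, since $T_0$ is a period of every exponential that appears. Finally $\tfrac1{T_0}\int_0^{T_0}|\Phi_N(\gamma(t))-\Phi(\gamma(t))|\,dt\to0$: writing $\Phi_N-\Phi=(s_N(x)-s(x))s_N(y)+s(x)(s_N(y)-s(y))$, one bounds the two pieces by Cauchy--Schwarz and Bessel's inequality, using that the pushforward of $dt$ on $[0,T_0]$ to either coordinate circle is a constant times Lebesgue measure and that $s_N\to s$ in $L^2(\mathbb{T})$. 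Letting $N\to\infty$ then gives the formula and completes the proof.
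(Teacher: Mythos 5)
The paper itself does not prove this lemma: it is imported verbatim from \cite{diogo} and used only through \cref{maincorollary}, which needs nothing beyond the absolute value $1/|pq|$. So you are supplying a proof where the paper gives none, and your route --- expand $s(u)=\sgn(\cos(2\pi u))$ in its square-wave Fourier series, multiply, substitute the line, keep only resonant frequencies, and justify the formal step by exploiting the rationality of $a/b$ (periodicity of $\Phi\circ\gamma$, comparison with the trigonometric polynomials $\Phi_N=s_Ns_N$, and the fact that the pushforward of normalized $dt$ on a period to each coordinate circle is Haar measure, so Cauchy--Schwarz plus $s_N\to s$ in $L^2(\mathbb{T})$ closes the gap) --- is exactly the kind of Fourier-analytic argument the cited paper runs, and the analytic justification you sketch is sound. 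The parity argument in the even case and the identification $2j+1=(2k+1)|q|$, $2\ell+1=(2k+1)|p|$ of the resonant pairs in the odd case are both correct.

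The genuine shortfall is that the one step you explicitly defer --- the sign and constant bookkeeping --- is the only step where your argument has to reproduce the stated right-hand side, and you cannot simply assert that it ``produces the closed form in the statement.'' Carrying it out: with $p,q>0$ and $2j+1=(2k+1)q$, $2\ell+1=(2k+1)p$ one gets $j+\ell=(2k+1)\tfrac{p+q}{2}-1$, hence $(-1)^{j+\ell}=-(-1)^{(p+q)/2}$, and the limit comes out as $(-1)^{\frac{p+q}{2}+1}\frac{8}{\pi^2pq}\sum_{k\geq 0}\frac{\cos(2\pi(2k+1)(p\beta-q\alpha))}{(2k+1)^2}$, i.e.\ opposite in sign to the display in the lemma as transcribed here. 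Your own proposed sanity check exposes this: for $p=q=1$, $\alpha=\beta=0$ the average of $\Phi(\gamma(t))=\sgn(\cos^2)$ is $+1$, while the stated formula gives $-1$; likewise $(a,b)=(1,3)$ gives average $-1/3$ (consistent with $\mathbb{P}(1,3)=1/3$), not $+1/3$. So either the restatement in this paper carries a sign typo (harmless for its purposes, since only $1/|pq|$ enters \cref{maincorollary}) or your completed computation will not match the statement literally; in either case you need to finish the bookkeeping, run the $p=q=1$ check, and state which prefactor you actually obtain rather than leaving it as a promissory note. You should also spell out the mixed-sign case $ab<0$ (resonance in the sum frequency), which you only gesture at, since the formula is claimed for all nonzero real $a,b$.
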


There is one important consequence of this result, which we use multiple times. We state and prove this below.

\begin{corollary}\label{maincorollary}
Let $a,b\in \mathbb{R}$ be nonzero such that $a/b=p/q$ for some coprime $p,q\in \mathbb{Z}$, and define $\gamma(t)=(at,bt)$ to be a ray on $\mathbb{T}^2$. If either $p$ or $q$ is even, then \[\int_0^1 \Phi(\gamma(t))=0.\] 
If both $p$ and $q$ are odd, then
\[\left|\int_0^1 \Phi(\gamma(t))\right|=\frac{1}{|pq|}.\]
\end{corollary}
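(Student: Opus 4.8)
The plan is to reduce the quantity $\int_0^1\Phi(\gamma(t))\,dt$ to the long-time average computed in \cref{og eq} and then specialize that formula to the base point $\alpha=\beta=0$.

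First I would reparametrize. Since $a/b=p/q$ with $p,q\neq 0$, cross-multiplying gives $aq=bp$, so $c:=a/p=b/q$ is a well-defined nonzero constant and $\gamma(t)=(cpt,cqt)$. Set $\tilde\gamma(s)=(ps,qs)$, so that $\gamma(t)=\tilde\gamma(ct)$. Because $p,q\in\mathbb Z$, the curve $\tilde\gamma$ is closed on $\mathbb T^2$, so $s\mapsto\Phi(\tilde\gamma(s))$ is periodic; and because $\gcd(p,q)=1$, a Bézout argument shows its period divides $1$. Hence $\int_0^1\Phi(\tilde\gamma(s))\,ds$ equals the mean of this periodic function over a full period, which in turn equals the Cesàro limit $\lim_{S\to\infty}\frac1S\int_0^S\Phi(\tilde\gamma(s))\,ds$. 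Changing variables $s=ct$ and using that integrating a $1$-periodic function over $[0,c]$ (with $c$ an integer, as in all later applications where $a,b\in\mathbb Z$) gives $c$ times its mean, one gets that $\int_0^1\Phi(\gamma(t))\,dt$ equals this same average. So it suffices to evaluate the right-hand side of \cref{og eq} along the ray $\tilde\gamma$, i.e. with $\alpha=\beta=0$.

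Now I would apply \cref{og eq} directly. If $p$ or $q$ is even, the limit, and therefore $\int_0^1\Phi(\gamma(t))\,dt$, is $0$; this is the first assertion. If both $p$ and $q$ are odd, the phase $p\beta-q\alpha$ vanishes, so every cosine in the series is $1$ and the formula reads
\[
  \int_0^1\Phi(\gamma(t))\,dt=(-1)^{\frac{p+q}{2}}\,\frac{8}{\pi^2 pq}\sum_{\ell=0}^\infty\frac{1}{(2\ell+1)^2}.
\]
Invoking the classical identity $\sum_{\ell\ge 0}(2\ell+1)^{-2}=\pi^2/8$, the right-hand side collapses to $(-1)^{(p+q)/2}/(pq)$, whose absolute value is $1/|pq|$, as claimed.

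The only genuinely delicate step is the first one: one must check that the plain integral over $[0,1]$ sees a whole number of periods of $\Phi\circ\gamma$, so that it coincides with the Cesàro average produced by \cref{og eq}. This is exactly where coprimality of $p$ and $q$ (forcing the period of $\Phi\circ\tilde\gamma$ to divide $1$) and the integrality of $c$ get used. Everything after that is a direct substitution into \cref{og eq} combined with the evaluation of the odd-index Basel series.
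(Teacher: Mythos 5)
Your argument is correct and follows essentially the same route as the paper: identify $\int_0^1 \Phi(\gamma(t))\,dt$ with the long-time average via periodicity, then evaluate \cref{og eq} at $\alpha=\beta=0$ using $\sum_{\ell\geq 0}(2\ell+1)^{-2}=\pi^2/8$. Your explicit caveat that $c=a/p$ must be an integer (true in all of the paper's applications, where $a,b\in\mathbb{N}$) is, if anything, more careful than the paper's proof, which simply asserts $1$-periodicity of $\Phi(\gamma(t))$.
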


\begin{proof}
For $\alpha=\beta=0$, the function $\Phi(\gamma(t))$ is $1$-periodic, so for any positive integer $k$, the integral of $\Phi(\gamma(t))$ on $[k,k+1]$ is the same. Then for any positive integer $T$,
\[\frac{1}{T}\int_0^T \Phi(\gamma(t))=\frac{1}{T}\cdot \sum_{k=0}^{T-1}\int_k^{k+1} \Phi(\gamma(t)) \, dt = \int_0^1 \Phi(\gamma(t)) \, dt.\]
For these values of $T$, equality in \cref{og eq} must hold without the limit. The result immediately follows for $p$ or $q$ even. If $p$ and $q$ are odd, note that $\alpha=\beta=0$ implies $p\beta-q\alpha=0$, so combined with $\sum_{\ell=0}^\infty 1/(2\ell+1)^2 = \pi^2/8$, we see that
\[\left|\int_0^1 \Phi(\gamma(t)) \, dt\right|= \left| \frac{8}{\pi^2 pq}\sum_{\ell=0}^\infty \frac{1}{(2\ell+1)^2}\right|=\frac{1}{|pq|}.\qedhere \]
\end{proof}

We consider lines on the three-dimensional torus $\mathbb{T}^3$, which we denote as $\gamma(t)=(at,bt,ct)$. Define the function
\begin{equation} \label{phi'}
\Psi(\gamma(t)) = \frac{\Phi(at,bt)+\Phi(at,ct)+ \Phi(bt,ct)-1}{2},
\end{equation}
which takes value $1$ when $\cos(2\pi at),\cos(2\pi bt),\cos(2\pi ct)$ have the same sign and $-1$ otherwise. Letting $I$ denote the set of all $x\in [0,2\pi]$ such that  $\Psi\left( \gamma\left(x/2\pi\right)\right)=1$, a change of variables shows
\begin{equation}\label{phi_to_P}
\begin{split}
    \int_0^1 \Psi(\gamma(t)) \, dt & = \frac{1}{2\pi} \int_0^{2\pi} \Psi\left( \gamma\left(\frac{x}{2\pi}\right)\right) \, dx \\
    & = \frac{1}{2\pi}\left(|I|-(2\pi-|I|) \right) = 2\cdot \mathbb{P}(a,b,c)-1.
\end{split}
\end{equation}

For the remainder of this section, we fix distinct $a,b,c\in \mathbb{N}$ and select $p,q,r,s,u,v\in \mathbb{N}$ such that
$a/b = p/q$, $a/c = r/s$, and $b/c = u/v$ with $\gcd(p,q)=\gcd(r,s)=\gcd(u,v)=1$. We now give a reduction to the case when $a,b,c$ are all odd.

\begin{lemma}\label{even reduction}
Suppose $\gcd(a,b,c)=1$ and $\mathbb{P}(a,b,c)\leq 1/9$. Then $a,b,c$ are odd and \[\frac{1}{|pq|}+\frac{1}{|rs|}+\frac{1}{|uv|}\geq \frac{5}{9}.\]
\end{lemma}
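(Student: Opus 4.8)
The plan is to exploit the identity \eqref{phi_to_P} together with \cref{maincorollary}. By \eqref{phi_to_P}, $\mathbb{P}(a,b,c) = \tfrac12\bigl(1 + \int_0^1 \Psi(\gamma(t))\,dt\bigr)$, and by the definition \eqref{phi'} of $\Psi$,
\[
\int_0^1 \Psi(\gamma(t))\,dt = \frac12\left(\int_0^1\Phi(at,bt)\,dt + \int_0^1\Phi(at,ct)\,dt + \int_0^1\Phi(bt,ct)\,dt - 1\right).
\]
Combining these, $4\,\mathbb{P}(a,b,c) - 1 = \int_0^1\Phi(at,bt)\,dt + \int_0^1\Phi(at,ct)\,dt + \int_0^1\Phi(bt,ct)\,dt$. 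Now \cref{maincorollary} tells us each of these three integrals either vanishes (when the relevant reduced fraction has an even numerator or denominator) or has absolute value exactly $1/|pq|$, $1/|rs|$, $1/|uv|$ respectively. Hence
\[
\left|\,4\,\mathbb{P}(a,b,c) - 1\,\right| \;\le\; \frac{1}{|pq|} + \frac{1}{|rs|} + \frac{1}{|uv|},
\]
where a term on the right is understood to be present only when the corresponding integral is nonzero (but including it as an upper bound when it is zero is harmless). Under the hypothesis $\mathbb{P}(a,b,c)\le 1/9$ we get $4\,\mathbb{P}(a,b,c)-1 \le 4/9 - 1 = -5/9$, so $|4\,\mathbb{P}(a,b,c)-1|\ge 5/9$, which immediately yields the desired inequality $1/|pq| + 1/|rs| + 1/|uv| \ge 5/9$.

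It remains to show $a,b,c$ are all odd. First I would observe that since $\gcd(a,b,c)=1$, not all of $a,b,c$ are even. Suppose for contradiction that at least one of them is even; by symmetry there are two cases to rule out: exactly one is even, or exactly two are even. The key tool is that if exactly one of $\{a,b\}$ is even then in lowest terms $a/b = p/q$ has exactly one of $p,q$ even, so $\int_0^1\Phi(at,bt)\,dt = 0$; whereas if $a,b$ are both odd then $p,q$ are both odd. I would enumerate the parities: if exactly one of $a,b,c$ is even, say $c$, then among the three pairs $(a,b),(a,c),(b,c)$ the pair $(a,b)$ is odd-odd (so contributes $1/|pq|\le 1$) while $(a,c)$ and $(b,c)$ are each odd-even and contribute $0$; thus the right side is at most $1$ — not yet a contradiction, so I need a sharper bound on $1/|pq|$. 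Since $c$ is even and $\gcd(a,b,c)=1$, the integers $a,b$ are not both $1$ unless... hmm — actually the cleaner route: if exactly one of $a,b,c$ is even then exactly \emph{one} of the three integrals is nonzero, giving $4\mathbb{P}-1 = \pm 1/|pq|$, so $\mathbb{P} \ge \tfrac14(1 - 1/|pq|) \ge \tfrac14(1 - 1/3)$ once we know $|pq|\ge 3$; and $|pq|=1$ forces $a=b$, $|pq|=2$ is impossible for coprime odd $p,q$. So $\mathbb{P}\ge 1/6 > 1/9$, contradiction. If exactly two of $a,b,c$ are even, say $b,c$, then $b/c=u/v$ in lowest terms has $u,v$ both odd (we divided out the common factor of $2$), while $(a,b)$ and $(a,c)$ are odd-even; so again exactly one integral is nonzero and the same bound $\mathbb{P}\ge 1/6$ applies. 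Either way we contradict $\mathbb{P}(a,b,c)\le 1/9$, so all of $a,b,c$ are odd.

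The main obstacle I anticipate is the bookkeeping in the parity case analysis — specifically making sure that in each configuration exactly one of the three pairwise reduced fractions is odd-odd, and then pinning down that the corresponding $|pq|$ (or $|uv|$) is at least $3$ so that the crude bound $\mathbb{P}\ge 1/6$ beats $1/9$. The point $|pq|=2$ cannot occur because $p,q$ coprime and both odd forces $pq$ odd, and $|pq|=1$ forces the two integers equal, contradicting distinctness; so $|pq|\ge 3$ always. Once the all-odd conclusion is in hand, the inequality $1/|pq|+1/|rs|+1/|uv|\ge 5/9$ is a one-line consequence of \cref{maincorollary} and \eqref{phi_to_P} as above, with no further case analysis needed since now all three integrals are genuinely of the form $\pm 1/(\text{odd}\cdot\text{odd})$.
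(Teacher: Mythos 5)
Your argument follows essentially the same route as the paper: combine \eqref{phi_to_P} and \eqref{phi'} with \cref{maincorollary}, bound the sum of the three pairwise integrals, and use parity of the reduced fractions to kill the mixed-parity pairs. Your algebraic packaging, $4\,\mathbb{P}(a,b,c)-1=\int_0^1\Phi(at,bt)\,dt+\int_0^1\Phi(at,ct)\,dt+\int_0^1\Phi(bt,ct)\,dt$, is a clean equivalent of the paper's estimate $7/9\le \tfrac12\bigl(\sum|\int\Phi|+1\bigr)$, and your observation that $|pq|\ge 3$ (coprime, odd, $a\neq b$) is exactly the point the paper also needs to get its bound $\le 2/3$; your one-even-case conclusion $\mathbb{P}\ge \tfrac14(1-\tfrac13)=1/6>1/9$ is the same estimate in different clothes. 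The derivation of $1/|pq|+1/|rs|+1/|uv|\ge 5/9$ from $\mathbb{P}\le 1/9$ is correct and matches the paper.

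The one genuine misstep is in the two-even case: your claim that if $b$ and $c$ are both even then $b/c=u/v$ in lowest terms has $u,v$ both odd is false in general (take $b=2$, $c=4$, where $u/v=1/2$; dividing out the gcd need not remove all factors of $2$ from both entries). This does not sink the argument, but as written the step fails and needs the obvious repair: if $u$ or $v$ is even, then by \cref{maincorollary} \emph{all three} integrals vanish, so $4\,\mathbb{P}(a,b,c)-1=0$ and $\mathbb{P}(a,b,c)=1/4>1/9$, an even easier contradiction; if $u,v$ are both odd, your $|uv|\ge 3$ bound gives $\mathbb{P}\ge 1/6$ as before. This is precisely how the paper handles it, with its remark that the inequality holds ``regardless of whether one or both of $p$ and $q$ are even'' (the paper's two even entries are $a,b$, so the surviving ratio there is $p/q$). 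With that one-line fix, your proof is complete and equivalent to the paper's.
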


\begin{proof}
Since $\mathbb{P}(a,b,c)\leq 1/9$,
(\ref{phi_to_P}) implies
\begin{equation}\label{7/9}
\left|\int_0^1 \Psi(at,bt,ct) dt \right| = \left|1-2\cdot \mathbb{P}(a,b,c)\right| \geq \left|1-2\cdot \frac{1}{9}\right| =\frac{7}{9}.
\end{equation}
We show that if $a,b,$ or $c$ are even, then \[\left|\int_0^1 \Psi(at,bt,ct) dt \right|\leq 
\frac23,\] so (\ref{7/9}) is not satisfied.
First, assume that we have one even integer out of $\{a,b,c\}$. Without loss of generality, suppose it is $c$. Then $v,s$ are even and $p,q$ are odd, so by triangle inequality and \cref{maincorollary}, 
\begin{equation*}
\begin{split}
    \left|\int_0^1 \Psi(a t,bt,c t)dt\right| &\leq \frac{1}{2}\left(\left|\int_0^1\Phi(at,bt)dt\right|+ \left|\int_0^1\Phi(a t,c t)dt\right|+\left|\int_0^1\Phi(b t,c t)dt\right|+1\right) \\
    & \leq \frac{1}{2|pq|}+\frac{1}{2} \leq \frac{2}{3}.
\end{split}
\end{equation*}
Next, assume that we have two even integers out of $\{a,b,c\}$. Without loss of generality, suppose they are $a$ and $b$. Then $r$ and $u$ are both even. Regardless of whether one or both of $p$ and $q$ are even, the above inequality still holds.
Combined, we see that all three of $a,b,$ and $c$ must be odd. Using the triangle inequality and \cref{maincorollary} again on (\ref{7/9}), we obtain
\begin{align*}
    \frac{7}{9} 
    &\leq  \frac{1}{2|pq|}+\frac{1}{2|rs|}+\frac{1}{2|uv|}+\frac{1}{2}.
\end{align*}
Rewriting this, we conclude that
\[\frac{5}{9} \leq \frac{1}{|pq|}+ \frac{1}{|rs|}+\frac{1}{|uv|}.\qedhere\]
\end{proof}

Finally, we rule out the case $a\neq 1$ and conclude with a proof of our main result.

\begin{lemma}\label{a=1 reduction}
Suppose $\text{gcd}(a,b,c) = 1$ and $a<b<c$. If $a \neq 1$, then $\mathbb{P}(a,b,c)> \frac{1}{9}$. 
\end{lemma}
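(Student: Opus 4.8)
The plan is to argue by contradiction. Suppose $\gcd(a,b,c)=1$, $a<b<c$, $a\neq1$, and $\mathbb{P}(a,b,c)\le\frac19$. By \cref{even reduction} the integers $a,b,c$ are all odd, so $a\ge3$, and, writing $S:=\frac{1}{pq}+\frac{1}{rs}+\frac{1}{uv}$, we have $S\ge\frac59$. Since $a,b,c$ are odd, so are $p=a/\gcd(a,b)$, $q=b/\gcd(a,b)$, and likewise $r,s,u,v$; and since $a<b<c$ we have $p<q$, $r<s$, $u<v$, so each of $pq,rs,uv$ is an \emph{odd} integer $\ge3$. The first reduction I would record is that $pq=3$ forces $\{p,q\}=\{1,3\}$, hence $b=3a$; similarly $rs=3$ forces $c=3a$ and $uv=3$ forces $c=3b$. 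Checking the three pairs: $pq=rs=3$ gives $b=c$, $rs=uv=3$ gives $a=b$, and $pq=uv=3$ gives $\{a,b,c\}=a\cdot\{1,3,9\}$, so $\gcd(a,b,c)=a\ne1$; each is impossible, so \emph{at most one} of $pq,rs,uv$ equals $3$. The identical argument with the value $5$ shows at most one of them equals $5$.

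The heart of the proof is then a short case analysis. If none of $pq,rs,uv$ equals $3$, all three are odd and $\ge5$ with at most one equal to $5$, so $S\le\frac15+\frac17+\frac17=\frac{17}{35}<\frac59$, a contradiction. Hence exactly one of them is $3$, and I would treat the three cases in turn. If $pq=3$, then $b=3a$ and $\gcd(a,b)=a$, so $\gcd(a,c)=1$ and $rs=ac\ge a(3a+2)\ge33$; moreover $uv=5$ would give $c=5b=15a$ with $\gcd(a,b,c)=a>1$, so $uv\ge7$, whence $S\le\frac13+\frac{1}{33}+\frac17<\frac59$. If $rs=3$, then $c=3a$ and $\gcd(a,b)=1$, so $pq=ab\ge15$, while $\gcd(b,3a)=\gcd(b,3)$ gives $uv=3ab/\gcd(b,3)^2\ge15$, whence $S\le\frac{1}{15}+\frac13+\frac{1}{15}<\frac59$. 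If $uv=3$, then $c=3b$ and $\gcd(a,b)=1$, so $pq=ab\ge15$, while $\gcd(a,3b)=\gcd(a,3)$ gives $rs=3ab\ge105$ when $3\nmid a$ and $rs=ab/3$ when $3\mid a$; the latter is $\ge33$ once $a\ge9$, so $S<\frac59$ in all these situations. The only configuration not ruled out is $uv=3$ with $a=3$, which forces $3\nmid b$, i.e.\ $(a,b,c)=(3,b,3b)$, with $pq=3b$, $rs=b$, $uv=3$.

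This last family is the genuine obstacle: there the crude bound $S<\frac59$ fails (for $b=5$ one gets $S=\frac35$), so I would instead use the exact sign information. Combining the definition of $\Psi$, the identity $\int_0^1\Psi(\gamma(t))\,dt=2\mathbb{P}(a,b,c)-1$ of \eqref{phi_to_P}, and \cref{maincorollary} gives
\[
\mathbb{P}(a,b,c)=\frac14\left(1+\frac{\epsilon_1}{pq}+\frac{\epsilon_2}{rs}+\frac{\epsilon_3}{uv}\right),
\]
where $\epsilon_1,\epsilon_2,\epsilon_3\in\{\pm1\}$ are the signs of $\int_0^1\Phi(at,bt)\,dt$, $\int_0^1\Phi(at,ct)\,dt$, $\int_0^1\Phi(bt,ct)\,dt$; by \cref{og eq} these equal $(-1)^{(p+q)/2}$, $(-1)^{(r+s)/2}$, $(-1)^{(u+v)/2}$ respectively. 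For $(a,b,c)=(3,b,3b)$ the reduced ratio $b/c=\frac13$ gives $\epsilon_3=(-1)^{(1+3)/2}=+1$, so, whatever the values of $\epsilon_1$ and $\epsilon_2$,
\[
\mathbb{P}(3,b,3b)\ge\frac14\left(1-\frac{1}{3b}-\frac1b+\frac13\right)=\frac13\left(1-\frac1b\right)\ge\frac{4}{15}>\frac19,
\]
contradicting $\mathbb{P}(a,b,c)\le\frac19$. Every branch thus reaches a contradiction, so $\mathbb{P}(a,b,c)>\frac19$. The step I expect to require care is precisely this $(3,b,3b)$ family: it is the only place where the sign phase of \cref{og eq}, rather than merely the triangle inequality that yields $S<\frac59$, is essential.
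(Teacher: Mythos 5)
Your case analysis up to the family $(3,b,3b)$ is correct and in places sharper than the paper's (the paper organizes the same reduction by divisibility cases and lands on the single exceptional triple $\{3,5,15\}$, which it dispatches by a direct computation with \cref{exact}). The genuine gap is in your treatment of the family $(3,b,3b)$: you take the sign factor $(-1)^{(p+q)/2}$ in \cref{og eq} at face value to conclude $\epsilon_3=+1$ for the pair $b/c=1/3$, but that sign, as restated in this paper, is inconsistent with known values and is in fact the opposite of the truth. For instance, $\mathbb{P}(1,3)=1/3$ forces $\int_0^1\Phi(t,3t)\,dt=2\cdot\frac13-1=-\frac13$, whereas the stated formula gives $+\frac13$; likewise $p=q=1$ would give $-1$ for an integral that is identically $1$. (A direct Fourier expansion of $\sgn(\cos)$ shows the correct factor is $(-1)^{(p+q)/2+1}$; the paper itself is unaffected because \cref{maincorollary} only ever uses the absolute value.) With the correct sign $\epsilon_3=-1$, your worst case over free $\epsilon_1,\epsilon_2$ becomes $\frac14\left(1-\frac13-\frac1b-\frac1{3b}\right)=\frac16-\frac1{3b}$, which at $b=5$ equals $\frac1{10}<\frac19$, so the argument fails at exactly the one delicate case. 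Indeed your displayed inequality $\mathbb{P}(3,b,3b)\ge\frac13\left(1-\frac1b\right)$ is false as a statement about actual probabilities: $\mathbb{P}(3,5,15)=\frac15<\frac4{15}$ and $\mathbb{P}(3,7,21)=\frac17<\frac27$.

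The conclusion still holds on this family, and your approach is easily repaired. For $b\ge7$ the crude triangle-inequality bound already suffices, since $\frac1{3b}+\frac1b+\frac13\le\frac1{21}+\frac17+\frac13=\frac{11}{21}<\frac59$, so only $(3,5,15)$ survives the $5/9$ test, and that single triple can be checked with \cref{exact} (the paper's route) or evaluated exactly from the signed formula, giving $\mathbb{P}(3,5,15)=\frac15>\frac19$. Alternatively, keep the signed argument but do not leave $\epsilon_1,\epsilon_2$ free: with the corrected signs one has $\epsilon_1=-\epsilon_2$ (the exponents $(3+b)/2$ and $(1+b)/2$ differ by one), so $\mathbb{P}(3,b,3b)=\frac16+\frac{\epsilon_2}{6b}\ge\frac16-\frac1{6b}\ge\frac2{15}>\frac19$.
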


\begin{proof}
If $a$, $b$, or $c$ is even, then the result follows from \cref{even reduction}. Assume then that $a$, $b$, and $c$ are all odd, so that $p$, $q$, $r$, $s$, $u$, and $v$ are all odd as well. We do not consider the case when both $a\mid b$ and $a\mid c$ since this violates $\gcd(a,b,c)=1$. We also do not consider the case when both $a\mid b$ and $b\mid c$ since this would imply $a\mid c$. The remaining cases can be grouped into the following situations:
\begin{enumerate}
    \item $a\nmid b,a\nmid c$, 
    \item $a\nmid c,b\nmid c$,
    \item $a\nmid b,b\nmid c$, and
    \item $a\nmid b,a\mid c,b\mid c$.
\end{enumerate}
By \cref{even reduction}, it suffices to show that in these cases,
$$\frac{1}{|pq|}+ \frac{1}{|rs|}+\frac{1}{|uv|} < \frac{5}{9}.$$

We will consider cases $(1)$, $(2)$, and $(3)$ simultaneously. Note that $1<a<b<c$ implies $q>p\geq 1$, $s>r\geq 1$, and $v>u\geq 1$.
If $a \nmid b$, it follows that $p \geq 3$ and $q \geq 5$. Likewise, $a \nmid c$ implies $r \geq 3$ and $s \geq 5$, and $b \nmid c$ implies $u \geq 3$ and $v \geq 5$. In $(1)$, $(2)$, and $(3)$, two out of the following three hold: $a \nmid b$, $a \nmid c$, or $b \nmid c$. Then
\begin{align*}
\frac{1}{|pq|}+ \frac{1}{|rs|}+\frac{1}{|uv|} &\leq \frac{1}{15}+ \frac{1}{15}+\frac{1}{3} = \frac{7}{15} < \frac{5}{9}.
\end{align*}
Thus, in these cases, we see that $\mathbb{P}(a, b, c)>1/9$. 

Now consider case $(4)$.
Note that $a \nmid b$, so $p \geq 3$ and $q \geq 5$. We consider $r$, $s$, $u$, and $v$. 
We know that $a \mid c$ and $b \mid c$ with $a<b$, so $s > v$.
If $s \geq 7$, we see that
\begin{align*}
\frac{1}{|pq|}+ \frac{1}{|rs|}+\frac{1}{|uv|} &\leq  \frac{1}{15}+ \frac{1}{7}+\frac{1}{3} = \frac{19}{35} < \frac{5}{9}.
\end{align*}
Note that $v>3$ implies $s\geq 7$ since $s>v$. Hence, the only possibility to attain $\mathbb{P}(a,b,c)\leq 1/9$ is $s = 5$ and $v = 3$. From the definition of $r$, $s$, $u$, and $v$, this implies that $a/c = 1/5$ and $b/c = 1/3$. We conclude that $c=5a$ and $c=3b$, which implies $b=5a/3$. Thus, we consider triples of the form $k\cdot \{a,5a/3,5a\}$. Recall that $\text{gcd}(a,b,c) = 1$, so we must have $a=3$, and $\{3,5,15\}$ is the only possibility. A direct check with \cref{exact} shows that $\mathbb{P}(3,5,15)>1/9$. 
\end{proof}

\begin{proof}[Proof of Main Result]
By \cref{threesg}, $1/9$ is achieved by $k\cdot\{1,3,9\}$ for any $k\in \mathbb{N}$. We show that no other choices of $a,b,c$ can attain $\mathbb{P}(a,b,c)\leq 1/9$. It suffices to consider $a< b< c$ with $\gcd(a,b,c)=1$.
Recall from \cref{even reduction} that if $\mathbb{P}(a,b,c) \leq 1/9$, then $a, b,$ and $c$ are odd and $$\frac{1}{|pq|}+ \frac{1}{|rs|}+\frac{1}{|uv|} \geq \frac{5}{9}.$$ In addition, \cref{a=1 reduction} shows that $a = 1$, which forces $p = r = 1$, $q = b$, and $s = c$. Since $b < c$, we also have $u \geq 1$ and $v \geq 3$. Additionally, $c$ is odd, so $c \geq b+2$. Combined, we see that
\begin{align*}
\frac{1}{|pq|}+ \frac{1}{|rs|}+\frac{1}{|uv|} &\leq \frac{1}{b}+ \frac{1}{b+2}+\frac{1}{3}.
\end{align*}
Note that for $b \geq 9$, we have
\begin{align*}
\frac{1}{b}+ \frac{1}{b+2}+\frac{1}{3} &\leq  \frac{1}{9}+ \frac{1}{11}+\frac{1}{3}= \frac{53}{99}< \frac{5}{9}.
\end{align*}
Hence, we must have $b < 9$, and since $b$ cannot be even, it suffices to consider $b\leq 7$. Since $\mathbb{P}(a,b)\geq 1/3$ for any $a,b$, it follows from \cref{bound an} that $c \leq 12b \leq 84$. Therefore, if $\mathbb{P}(a,b,c)\leq 1/9$, we must have $a=1$, $b\leq 7$, and $c\leq 84$. Computer verification using \cref{exact} then establishes the result.
\end{proof}

\end{document}